\documentclass[11pt,reqno]{amsart}
\usepackage{amsmath} 
\usepackage[T1]{fontenc}
\usepackage[utf8]{inputenc}
\usepackage{lmodern}
\usepackage{microtype}
\usepackage{amsmath,amssymb,mathtools}
\usepackage{booktabs}
\usepackage{enumitem}
\usepackage{tabularx}
\usepackage{xcolor}
\usepackage{listings}
\usepackage{xurl}
\usepackage[pdfusetitle,colorlinks=true,
  pdfauthor={Junyu Guo, Hao Shen, Junqi Liu, and Lihong Zhi},
  linkcolor=blue!45!black,citecolor=blue!45!black,
  urlcolor=blue!45!black]{hyperref}

\definecolor{keywordcolor}{rgb}{0.55,0.08,0.08}
\definecolor{symbolcolor}{rgb}{0.0,0.10,0.55}
\definecolor{sortcolor}{rgb}{0.05,0.40,0.05}
\definecolor{tacticcolor}{rgb}{0.0,0.35,0.18}
\definecolor{attributecolor}{rgb}{0.55,0.20,0.0}
\definecolor{commentcolor}{rgb}{0.32,0.32,0.32}

\newtheorem{theorem}{Theorem}[section]
\newtheorem{proposition}[theorem]{Proposition}
\newtheorem{lemma}[theorem]{Lemma}

\theoremstyle{definition}

\theoremstyle{remark}

\theoremstyle{plain}
\newtheorem{problem}[theorem]{Problem}

\makeatletter
\renewcommand{\paragraph}{%
  \@startsection{paragraph}{4}{\z@}%
    {1.25ex \@plus .5ex \@minus .2ex}
    {-1em}
    {\normalfont\normalsize\bfseries}}
\makeatother
\newcommand{\M}{\ensuremath{\mathcal{M}}}
\newcommand{\NN}{\mathbb N}
\newcommand{\CC}{\mathbb C}

\newcommand{\lm}{\operatorname{lm}}
\newcommand{\lt}{\operatorname{lt}}
\newcommand{\lc}{\operatorname{lc}}
\newcommand{\inideal}{\operatorname{in}}
\newcommand{\NF}{\operatorname{NF}}
\newcommand{\wt}{\operatorname{wt}}
\newcommand{\smom}{\operatorname{sm}}

\newcommand{\modulo}[1]{\pmod {#1}}

\newcolumntype{Y}{>{\raggedright\arraybackslash}X}
\allowdisplaybreaks
\title[A  solution to Iima--Yoshino Problem 2.3]
{A Solution to Iima--Yoshino Problem 2.3\\
and Its Formalization in Lean 4}

\author{Junyu Guo}
\address{Institute of Logic and Cognition, Department of Philosophy,
Sun Yat-sen University, Guangzhou 510275, China}
\email{guojy228@mail2.sysu.edu.cn}

\author{Hao Shen}
\address{State Key Laboratory of Mathematical Sciences,
Academy of Mathematics and Systems Science, Chinese Academy of Sciences,
Beijing 100190, China; University of Chinese Academy of Sciences,
Beijing 100049, China}
\email{shenhao24@amss.ac.cn}

\author{Junqi Liu}
\address{State Key Laboratory of Mathematical Sciences,
Academy of Mathematics and Systems Science, Chinese Academy of Sciences,
Beijing 100190, China; University of Chinese Academy of Sciences,
Beijing 100049, China}
\email{liujunqi@amss.ac.cn}

\author{Lihong Zhi}
\address{State Key Laboratory of Mathematical Sciences,
Academy of Mathematics and Systems Science, Chinese Academy of Sciences,
Beijing 100190, China; University of Chinese Academy of Sciences,
Beijing 100049, China}
\email{lzhi@mmrc.iss.ac.cn}

\subjclass[2020]{Primary 13P10; Secondary 05A17, 11P84, 68V20}
\keywords{Gr\"obner basis, infinitely many variables,
Rogers-Ramanujan identity, partition bijection, Lean 4, formalization}

\begin{document}

\begin{abstract}
Iima and Yoshino asked for an ideal \(I\) in
\(S=k[x_1,x_2,\ldots]\), with \(\deg x_i=i\), and a monomial order
such that
\[
 S/I\cong k[x_i:i\equiv\pm1\pmod5],
 \qquad
 \operatorname{in}(I)=(x_i^2,x_ix_{i+1}:i\geq1).
\]
We construct such an ideal and monomial order over every field \(k\)
of characteristic different from \(5\) containing an element \(c\)
with \(c^2+c=1\). The ideal has an explicit infinite homogeneous
reduced Gr\"obner basis. A five-periodic syzygy derived from a
pentagon identity proves that all basis relations belong to \(I\)
and supplies standard representations for the non-coprime critical
pairs. Triangular elimination establishes the graded quotient
isomorphism. Together, the quotient and initial ideal descriptions
yield the partition form of the first Rogers-Ramanujan identity.
In each weighted degree, a perfect matching in the support of the
normal-form matrix gives a bijection between the two partition
classes. We formalize the complex specialization in Lean~4 using
Mathlib and our set-based theory of infinite Gr\"obner bases,
including the reduced basis, the graded quotient isomorphism,
and the partition-matching theorem.
\end{abstract}

\maketitle
\enlargethispage{3pt}

\section{Introduction and main result}\label{sec:introduction}

Let \(k\) be a field and let
\begin{equation}\label{eq:ring}
  S=k[x_1,x_2,\ldots],\qquad \deg x_i=i.
\end{equation}
Iima and Yoshino developed Gr\"obner-basis theory for \(S\) and used
polynomial division to study partition identities.  They concluded
their paper with the following problem:

\begin{problem}\cite[Problem~2.3]{IimaYoshino2009} \label{prob:IY}
Find an ideal \(I\subseteq S\) and a monomial order such that
\begin{equation}\label{eq:IYproblem}
  S/I\cong k[x_i\mid i\equiv\pm1\pmod 5],
  \qquad
  \inideal(I)=J:=(x_i^2,x_ix_{i+1}\mid i\geq1).
\end{equation}
\end{problem}

The monomials of weighted degree \(n\) in the polynomial algebra on
the right-hand side of \eqref{eq:IYproblem} encode partitions of \(n\)
into parts congruent to \(1\) or \(4\) modulo \(5\), whereas the
standard monomials of the same degree modulo \(J\) encode partitions
whose consecutive parts differ by at least two. If \(I\) is
homogeneous and the quotient isomorphism preserves the grading,
the two requirements in Problem~\ref{prob:IY} imply that these
partition classes have the same cardinality for every \(n\),
giving an algebraic realization of the first Rogers-Ramanujan
identity~\cite{Andrews1998}. Our construction satisfies both
grading conditions.
\paragraph{Our contributions.}
We give an explicit solution to Problem~\ref{prob:IY} over every field
\(k\) satisfying
\begin{equation}\label{eq:field-hypothesis}
  \operatorname{char}(k)\neq5,
  \qquad
  c\in k,
  \qquad
  c^2+c=1.
\end{equation}

The hypothesis \eqref{eq:field-hypothesis} holds, for example, over
\(\mathbb Q(\sqrt5)\) and \(\mathbb C\), with
\(c=(-1\pm\sqrt5)/2\).  Define five-periodic sequences \(s_j\) and
\(\eta_j\) by
\begin{equation}\label{eq:periodic-data}
\begin{array}{c|ccccc}
j      &0&1&2&3&4\\ \hline
s_j    &2&c&-1-c&-1-c&c\\
\eta_j &0&1&c&-c&-1
\end{array}
\end{equation}
and read their subscripts modulo \(5\).  For \(n\geq2\), set
\begin{equation}\label{eq:gn-definition}
g_n=(s_n-c)x_n+
\sum_{1\leq i<n/2}s_{n-2i}x_ix_{n-i}
+\begin{cases}
x_{n/2}^2,&2\mid n,\\
0,&2\nmid n.
\end{cases}
\end{equation}
Let
\begin{equation}\label{eq:D-I-definition}
  D=\{n\geq2\mid n\equiv0,2,3\pmod5\},
  \qquad
  I=(g_n\mid n\in D).
\end{equation}
Every \(g_n\) is homogeneous of weighted degree \(n\), and hence \(I\)
is homogeneous.  Finally, put
\[
  A=\{n\geq1\mid n\equiv1,4\pmod5\},
  \qquad
  T=k[y_n\mid n\in A],
  \qquad
  \deg y_n=n.
\]

\begin{theorem}\label{thm:main}
Assume that \(k\) satisfies \eqref{eq:field-hypothesis}.  The canonical
homomorphism
\begin{equation}\label{eq:canonical-map}
  \theta:T\longrightarrow S/I,
  \qquad
  y_n\longmapsto\overline{x_n},
\end{equation}
is an isomorphism of graded \(k\)-algebras.  Moreover, for the monomial
order defined in Section~\ref{sec:groebner}, the set
\begin{equation}\label{eq:G-definition}
  \mathcal G=
  \{g_{2m}\mid m\geq1\}
  \cup
  \{c^{-1}g_{2m+1}\mid m\geq1\}
\end{equation}
is the reduced Gr\"obner basis of \(I\).  
Consequently,
\begin{equation}\label{eq:initial-ideal}
  \inideal(I)=(x_m^2,x_mx_{m+1}\mid m\geq1)=J.
\end{equation}
\end{theorem}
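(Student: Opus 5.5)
Since \(I\) is generated by the \(g_n\) with \(n\in D\) only, while \(\mathcal G\) contains \(g_n\) for every \(n\geq2\), my plan has three stages. First, show that every \(g_n\) with \(n\equiv1,4\pmod5\) also lies in \(I\). Second, show that \(\mathcal G\) is a Gr\"obner basis with the stated leading terms. Third, deduce the isomorphism \(\theta\) from a count of dimensions together with triangular elimination.

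\textbf{Step 1: the monomial order.} I would choose a degree-compatible order in which the leading monomial of \(g_{2m}\) is \(x_m^2\) and that of \(g_{2m+1}\) is \(x_mx_{m+1}\). The coefficient of \(x_{m}x_{m+1}\) in \(g_{2m+1}\) is \(s_1=c\), which explains the normalization \(c^{-1}g_{2m+1}\). A natural candidate is a weighted order that compares total weight first and then prefers monomials whose indices are most balanced, for instance by a reverse-lex tie-break favouring large minimal index. Under such an order every quadratic \(x_ix_{n-i}\) with \(i<n/2-1\) and the linear term \(x_n\) come out smaller. Once the leading terms are fixed, reducedness is immediate: the non-leading monomials of \(g_n\) are \(x_n\) and the \(x_ix_{n-i}\) with \(n-i\geq i+2\), and none of these is divisible by any \(x_m^2\) or \(x_mx_{m+1}\).

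\textbf{Step 2: membership and S-pairs via a syzygy.} For \(n\equiv1,4\), the coefficient \(s_n-c\) vanishes, since \(s_1=s_4=c\). So \(g_n\) is a pure quadric, and I must express it through lower \(g\)'s. For this I would look for a five-periodic identity of the form
\[
\sum_{i} \alpha_{n,i}\, x_i\, g_{n-i} \;=\; \beta_n g_n ,
\]
with coefficients built from the \(\eta_j\). Comparing coefficients reduces it to a finite set of identities among \(s_j,\eta_j\) modulo \(5\), which should amount to a pentagon-type relation using \(c^2+c=1\). Applied inductively, the identity gives \(g_n\in I\) for \(n\in A\). The same identity, shifted, should provide standard representations for the non-coprime critical pairs, namely \((x_m^2,x_mx_{m+1})\) and \((x_mx_{m+1},x_{m+1}^2)\); overlaps such as \(x_mx_{m+1}\) with \(x_{m+1}x_{m+2}\) are handled the same way. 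Coprime pairs are handled by Buchberger's product criterion. To conclude that \(\mathcal G\) is a Gr\"obner basis, I would invoke the Buchberger criterion for \(S\) in infinitely many variables, which holds degreewise because each graded piece involves finitely many variables. This gives \(\inideal(I)=J\).

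\textbf{Step 3: the isomorphism.} Using \(\inideal(I)=J\), \(\dim_k (S/I)_n\) equals the number of partitions of \(n\) with difference at least two. To avoid citing Rogers--Ramanujan, I would instead argue surjectivity and injectivity directly. For surjectivity, each \(g_n\) with \(n\in D\) has linear coefficient \(s_n-c\neq0\); this follows from the values \(2-c\), \(-1-2c\) and \(-1-2c\), which are nonzero because \((2c+1)^2=5\neq0\) and \(2-c=0\) would force \(c^2+c=6\neq1\). Hence \(x_n\equiv\) a polynomial in lower variables, and by induction every \(\overline{x_n}\) lies in the image of \(\theta\). For injectivity, I would use triangular elimination: \(S/I\) is obtained from \(S\) by eliminating each \(x_n\), \(n\in D\), via \(g_n\), so \(S/I\cong k[x_n\mid n\in A]\). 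To make this precise, consider the substitution homomorphism \(S\to T\) defined recursively by \(x_n\mapsto -(s_n-c)^{-1}(g_n-(s_n-c)x_n)\) for \(n\in D\) and \(x_n\mapsto y_n\) for \(n\in A\). It kills \(I\) and is inverse to \(\theta\). Grading is preserved because everything is homogeneous.

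The main obstacle is Step 2: finding the exact syzygy and verifying the periodic coefficient identities, and then matching it with each type of critical pair. I would first compute small cases (\(n\le 12\)) to guess the coefficients \(\alpha_{n,i}\), and only then prove the general identity by reducing modulo \(5\).
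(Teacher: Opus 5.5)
Your Step~3 is the paper's triangular-elimination argument, and your reducedness check is correct. The gap is Step~2, which you yourself flag as the main obstacle and leave as a hope; it carries essentially all of the Gr\"obner-basis content. It is needed even for \(\mathcal G\subseteq I\): the \(g_n\) with \(n\equiv1,4\pmod5\) are not generators of \(I\), and nothing else in your plan puts them in \(I\). The missing idea is where the syzygy comes from. Take \(\zeta\) (in an extension) with \(\zeta+\zeta^{-1}=c\). Then \(\zeta\) is a primitive fifth root of unity, \(s_j=\zeta^j+\zeta^{-j}\), \(\eta_j=(\zeta^j-\zeta^{-j})/(\zeta-\zeta^{-1})\), and \(\sum_{n\ge2}g_nt^n=F(\zeta^{-1}t)F(\zeta t)-(1-c+cF(t))\) with \(F=1+\sum_{n\ge1}x_nt^n\). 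Put \(H=c^{-1}F\), \(E(t)=H(\zeta^{-1}t)H(\zeta t)-1-H(t)\), \(H_j(t)=H(\zeta^jt)\) and \(E_j(t)=E(\zeta^jt)\). The purely formal pentagon identity \(H_2E_4-H_3E_1-E_2+E_3=0\) then gives, at \(t^N\),
\[
a_Ng_N+\sum_{r=1}^{N-2}\eta_{4N-2r}\,x_rg_{N-r}=0,\qquad a_N=-(\eta_N+c\eta_{2N}),
\]
where \(a_N=0\) for \(N\equiv0,2,3\) and \(a_N=\pm(2-c)\) is a unit for \(N\equiv1,4\pmod5\). Induction on \(N\) then gives \(\mathcal G\subseteq I\). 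The precise coefficients also matter for the critical pairs. For \(N=3m+1\) and \(N=3m+2\), the terms \(r=m,m+1\) combine to \(cS(g_{2m},g_{2m+1})\) and \(cS(g_{2m+1},g_{2m+2})\). The family \((x_{m-1}x_m,x_mx_{m+1})\) arises at \(N=3m\) only because the central term \(r=m\) has coefficient \(\eta_{10m}=0\), leaving \(r=m\pm1\). So that family is not ``handled the same way'' for free.

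Your concrete monomial order also fails. Any tie-break favouring a large minimal index ranks \(x_n\) (minimal index \(n\)) above every \(x_ix_{n-i}\). Hence \(\lm(g_n)=x_n\) for each \(n\in D\), where \(s_n-c\neq0\), so \(\inideal(I)\) contains the variables \(x_n\), \(n\in D\), and cannot equal \(J\). The same order ranks \(q_N=x_{\lfloor N/2\rfloor}x_{\lceil N/2\rceil}\) above the cubic least common multiples, so for \(N\equiv1,4\pmod5\) the term \(a_Ng_N\) would spoil the standard representation. What works, and what the paper uses, is this: compare weight first, then call the monomial with smaller second moment \(\smom(\alpha)=\sum_i i^2\alpha_i\) larger, then break ties lexicographically. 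Under this order \(x_n\), whose second moment \(n^2\) is maximal, is the smallest monomial of weight \(n\), and \(q_n\) leads \(g_n\). Moreover, with \(\rho_N(r)=r^2+\lceil (N-r)^2/2\rceil\), the two S-pair terms are exactly the nonzero syzygy terms of minimal second moment. These minima are \(3m^2+2m+1\), \(3m^2+4m+2\) and \(3m^2+2\) in the three families, while \(\smom(q_N)=\lceil N^2/2\rceil\) is larger. Hence every remaining term lies strictly below the least common multiple, and the syzygy becomes a standard representation.
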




We have also formally verified in Lean~4 the complex specialization of
the main theorem and the partition-matching result.\footnote{\url{https://github.com/WuProver/IimaYoshinoProblem23}}
The formal proof works directly with the full infinite Gr\"obner basis and verifies the quotient
isomorphism, the reduced-basis property, the initial ideal, and the
matching theorem. Thus, the paper provides both a mathematical proof
under hypothesis~\eqref{eq:field-hypothesis} and a kernel-checked
formal proof of its specialization to \(k=\mathbb C\).


\paragraph{Related work}
Infinite Gr\"obner bases have been used to study arc spaces and
partition identities
\cite{AfsharijooMourtada2020, BaiGorskyKivinen2020,BruschekMourtadaSchepers2013}.
In characteristic zero, Bruschek, Mourtada, and
Schepers~\cite[Section~5]{BruschekMourtadaSchepers2013} obtain the
same initial ideal as in Problem~\ref{prob:IY} from the focused arc
algebra of the double point. Its quotient contains a nonzero class
\(\overline{y_1}\) satisfying \(\overline{y_1}^{\,2}=0\), so it is
nonreduced and cannot satisfy the polynomial-quotient condition
in~\eqref{eq:IYproblem}. Classical bijective proofs of the
Rogers–Ramanujan identities were given by Garsia–Milne and
Bressoud–Zeilberger \cite{BressoudZeilberger1982,GarsiaMilne1981}.
Proposition~\ref{prop:matching} obtains a partition bijection by
selecting a perfect matching in the support of the normal-form
matrix associated with our Gr\"obner basis.

Machine-checked developments of Gr\"obner bases include Buchberger's
algorithm in Coq~\cite{Thery2001}, theory refinement in
Mizar~\cite{Schwarzweller2006}, and module Gr\"obner bases and the
\(F_4\) algorithm in Isabelle/HOL~\cite{MaletzkyImmler2018}.
Our formalization builds on the Lean~4 theory
of~\cite{GuoShenLiuZhi2026}, which supports arbitrary variable-index
types and infinite sets of basis elements. This framework allows us
to work directly with the full infinite basis and verify the complex
specialization of Theorem~\ref{thm:main}, together with the
partition-matching result of Proposition~\ref{prop:matching}.

To the best of our knowledge, Theorem~\ref{thm:main} provides the
first construction of an ideal and monomial order satisfying both
requirements of Iima and Yoshino's Problem~2.3 under
hypothesis~\eqref{eq:field-hypothesis}. The field restriction is part
of the result.
\paragraph{Organization.}
Section~\ref{sec:syzygy} constructs the relations and proves their
syzygy. Section~\ref{sec:quotient} establishes the quotient
isomorphism, and Section~\ref{sec:groebner} proves the
Gr\"obner-basis assertion. Section~\ref{sec:partitions} derives
the partition correspondence, and Section~\ref{sec:lean}
describes the Lean formalization.

\section{The relations \(g_n\) and their syzygy}\label{sec:syzygy}

We first explain the origin of the polynomials \(g_n\).  Let \(K/k\)
be a field extension containing a root \(\zeta\) of \(z^2-cz+1\).
Then \(\zeta\neq0\), \(\zeta+\zeta^{-1}=c\), and \(c^2+c=1\) gives
\[
  \zeta^2+\zeta+1+\zeta^{-1}+\zeta^{-2}=0.
\]
It follows that \(\zeta^5=1\).  Since
\(\operatorname{char}(k)\neq5\), we have \(\zeta\neq1\); hence
\(\zeta\) is a primitive fifth root of unity and
\(\Delta:=\zeta-\zeta^{-1}\neq0\).  The periodic sequences in
\eqref{eq:periodic-data} can therefore be written as
\begin{equation}\label{eq:s-eta-zeta}
  s_j=\zeta^j+\zeta^{-j},
  \qquad
  \eta_j=\frac{\zeta^j-\zeta^{-j}}{\Delta}.
\end{equation}

In \(K[x_1,x_2,\ldots][[t]]\), set
\[
  F(t)=1+\sum_{n\geq1}x_nt^n
\]
and 
\begin{equation}\label{eq:R-series}
  R(t)=F(\zeta^{-1}t)F(\zeta t)-(1-c+cF(t)).
\end{equation}
Pairing the terms indexed by \(i\) and \(n-i\) and using
\eqref{eq:s-eta-zeta} gives
\begin{equation}\label{eq:R-coefficients}
  R(t)=\sum_{n\geq2}g_nt^n.
\end{equation}
Thus the family \(\{g_n\}\) is obtained from the coefficients of a
single quadratic generating-function relation.  Its main property is
the following syzygy.

\begin{proposition}[Five-periodic syzygy]\label{prop:syzygy}
For every \(N\geq2\),
\begin{equation}\label{eq:five-periodic-syzygy}
  a_Ng_N+
  \sum_{r=1}^{N-2}\eta_{4N-2r}x_rg_{N-r}=0,
  \qquad
  a_N=-(\eta_N+c\eta_{2N}).
\end{equation}
Moreover,
\begin{equation}\label{eq:aN-values}
  a_N=
  \begin{cases}
    0,&N\equiv0,2,3\pmod5,\\
    c-2,&N\equiv1\pmod5,\\
    2-c,&N\equiv4\pmod5,
  \end{cases}
\end{equation}
and, for \(1\leq r\leq N-2\),
\begin{equation}\label{eq:eta-vanishing}
  \eta_{4N-2r}=0
  \quad\Longleftrightarrow\quad
  r\equiv2N\pmod5.
\end{equation}
In particular, \(a_N\) is a unit whenever
\(N\equiv1,4\pmod5\).
\end{proposition}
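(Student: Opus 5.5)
The plan is to derive \eqref{eq:five-periodic-syzygy} as the coefficient of \(t^N\) in an identity of power series in \(K[x_1,x_2,\ldots][[t]]\) built from \(R\). Since \eqref{eq:R-coefficients} gives \(R(t)=\sum_{m\ge2}g_mt^m\), the expression \(\sum_{r=1}^{N-2}\eta_{4N-2r}x_rg_{N-r}\) is a coefficient extraction from \((F(\cdot)-1)R(\cdot)\); the upper limit \(N-2\) is automatic because \(g_m\) only starts at \(m=2\). The finished statement is over \(k\), but since \(k\subseteq K\) is a field extension, proving the identity over \(K\) suffices.

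\emph{Step 1: rewrite the sum as a coefficient.} By \eqref{eq:s-eta-zeta},
\[
\Delta\,\eta_{4N-2r}=\zeta^{4N}\zeta^{-2r}-\zeta^{-4N}\zeta^{2r}.
\]
Use the fact that \(\zeta^{aN}\,[t^N]G(t)=[t^N]G(\zeta^a t)\), together with \(\zeta^4=\zeta^{-1}\). This gives
\[
\Delta\sum_{r}\eta_{4N-2r}x_rg_{N-r}
=[t^N]\Bigl((F(\zeta^{2}t)-1)R(\zeta^{-1}t)-(F(\zeta^{-2}t)-1)R(\zeta t)\Bigr).
\]

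\emph{Step 2: the pentagon computation.} Put \(F_a=F(\zeta^a t)\) and
\[
\Phi=F_2R(\zeta^{-1}t)-F_{-2}R(\zeta t).
\]
Expanding with \eqref{eq:R-series}, the cubic terms \(F_2F_{-2}F_0\) cancel, leaving
\[
\Phi=-(1-c)(F_2-F_{-2})-c\,(F_2F_{-1}-F_{-2}F_1).
\]
The key observation is that \(F_2F_{-1}=F(\zeta^{-1}s)F(\zeta s)\) with \(s=\zeta^{3}t=\zeta^{-2}t\). Hence, by \eqref{eq:R-series},
\[
F_2F_{-1}=R(\zeta^{-2}t)+1-c+cF_{-2},
\]
and symmetrically \(F_{-2}F_1=R(\zeta^{2}t)+1-c+cF_2\). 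Substituting, the linear terms collect to \((c^2+c-1)(F_2-F_{-2})=0\) by \eqref{eq:field-hypothesis}. What remains is
\[
\Phi=-c\bigl(R(\zeta^{-2}t)-R(\zeta^{2}t)\bigr),
\]
a linear combination of rescaled copies of \(R\). I expect this step, namely spotting the substitution that turns the quadratic cross terms back into \(R\), to be the main obstacle; everything else is bookkeeping.

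\emph{Step 3: extract the coefficient.} Taking \([t^N]\), the Step 1 expression equals
\[
[t^N]\Phi-[t^N]\bigl(R(\zeta^{-1}t)-R(\zeta t)\bigr).
\]
Each term on the right is a multiple of \(g_N\), namely
\[
\bigl(c(\zeta^{2N}-\zeta^{-2N})+(\zeta^{N}-\zeta^{-N})\bigr)g_N
=\Delta(c\,\eta_{2N}+\eta_N)\,g_N .
\]
Dividing by \(\Delta\neq0\) yields \eqref{eq:five-periodic-syzygy} with \(a_N=-(\eta_N+c\eta_{2N})\). I will recheck the signs in this step carefully.

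The remaining claims are table lookups in \eqref{eq:periodic-data}.

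\emph{Values of \(a_N\).} For \(N\equiv0,1,2,3,4\pmod5\), the pair \((\eta_N,\eta_{2N})\) is \((0,0)\), \((1,c)\), \((c,-1)\), \((-c,1)\), \((-1,-c)\). Hence \(a_N\) equals \(0\), \(-(1+c^2)\), \(0\), \(0\), \(1+c^2\) respectively. Since \(c^2=1-c\), we have \(1+c^2=2-c\), which gives \eqref{eq:aN-values}.

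\emph{Vanishing of \(\eta\).} From the table, \(\eta_j=0\) exactly when \(j\equiv0\pmod5\). The other values \(1,\pm c,-1\) are nonzero because \(c\neq0\), as \(c^2+c=1\). Since \(\gcd(2,5)=1\), we get \(4N-2r\equiv0\pmod5\) if and only if \(r\equiv2N\pmod5\), which is \eqref{eq:eta-vanishing}.

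\emph{Unit claim.} Finally, \(2-c\) is a unit. If \(c=2\), then \(c^2+c=1\) would give \(6=1\), i.e.\ \(5=0\) in \(k\), which contradicts \(\operatorname{char}(k)\neq5\).
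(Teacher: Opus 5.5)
Your proof is correct and follows the paper's route. Your identity $F(\zeta^2t)R(\zeta^{-1}t)-F(\zeta^{-2}t)R(\zeta t)=-c\bigl(R(\zeta^{-2}t)-R(\zeta^2t)\bigr)$ is, after rearranging, exactly $c^3(H_2E_4-H_3E_1-E_2+E_3)=0$, the paper's pentagon identity with $H=c^{-1}F$ and $E=c^{-2}R$; your coefficient extraction, table lookups and descent from $K$ to $k$ also match the paper, and the only real difference is that you spend $c^2+c=1$ on cancelling the linear terms instead of on the normalization $E=c^{-2}R$, which spells out the ``direct substitution'' the paper leaves implicit.
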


\begin{proof}
Put \(\gamma=c^{-1}=1+c\), \(H=c^{-1}F\), and
\[
  E(t)=H(\zeta^{-1}t)H(\zeta t)-1-H(t)=c^{-2}R(t).
\]
For subscripts modulo \(5\), set
\[
  H_j(t)=H(\zeta^jt),
  \qquad
  E_j(t)=E(\zeta^jt).
\]
Then \(E_j=H_{j-1}H_{j+1}-1-H_j\), and direct substitution gives
\begin{equation}\label{eq:pentagon-identity}
  H_2E_4-H_3E_1-E_2+E_3=0.
\end{equation}
Writing
\[
  E(t)=\sum_{n\geq2}e_nt^n,
  \qquad e_n=c^{-2}g_n,
\]
and comparing coefficients of \(t^N\) in
\eqref{eq:pentagon-identity} gives
\[
  -\Delta(\gamma\eta_N+\eta_{2N})e_N
  +\gamma\Delta\sum_{r=1}^{N-2}
     \eta_{4N-2r}x_re_{N-r}=0.
\]
Dividing by \(\Delta\), multiplying by \(c\), and substituting
\(e_n=c^{-2}g_n\) proves \eqref{eq:five-periodic-syzygy} after
extending scalars to \(K\).  Since
\[
  S\longrightarrow K[x_1,x_2,\ldots]
\]
is injective and all coefficients in
\eqref{eq:five-periodic-syzygy} belong to \(k\), the identity descends
to \(S\).

Equations \eqref{eq:aN-values} and \eqref{eq:eta-vanishing} follow
directly from the five values in \eqref{eq:periodic-data}.  Finally,
\(2-c\neq0\), since \(c=2\) together with \(c^2+c=1\) would imply
\(5=0\).  Hence \(2-c\) and \(c-2\) are units in \(k\).
\end{proof}

The ideal \(I=\langle g_n\mid n\in D\rangle\) defined in (\ref{eq:D-I-definition})  uses only three of the
five residue classes.  The five-periodic syzygy shows that all remaining
relations already belong to \(I\).

\begin{proposition}\label{prop:all-relations}
We have
\[
  I=\langle g_n\mid n\geq2\rangle.
\]
\end{proposition}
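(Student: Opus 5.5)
We argue by strong induction on \(N\geq2\) that \(g_N\in I\). The inclusion \(I\subseteq\langle g_n\mid n\geq2\rangle\) is immediate from \eqref{eq:D-I-definition}. For the reverse inclusion, we only need to treat those \(N\) with \(N\equiv1,4\pmod5\), because \(g_N\in I\) by definition when \(N\in D\).

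For the inductive step, fix \(N\geq2\) with \(N\equiv1,4\pmod5\). Assume that \(g_n\in I\) for all \(2\leq n<N\). The five-periodic syzygy of Proposition~\ref{prop:syzygy} gives
\[
  a_Ng_N=-\sum_{r=1}^{N-2}\eta_{4N-2r}\,x_r\,g_{N-r}.
\]
Each index \(N-r\) on the right satisfies \(2\leq N-r\leq N-1\). By the induction hypothesis, every \(g_{N-r}\in I\), so the right-hand side lies in \(I\). By \eqref{eq:aN-values}, \(a_N=\pm(2-c)\) for these residue classes, and Proposition~\ref{prop:syzygy} shows that this is a unit in \(k\). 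Multiplying by \(a_N^{-1}\) gives \(g_N\in I\).

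The base of the induction needs a separate check. The smallest \(N\) with \(N\equiv1,4\pmod5\) and \(N\geq2\) is \(N=4\). Here the sum runs over \(r=1,2\) and involves \(g_3\) and \(g_2\), both of which lie in \(D\). So the same argument works there, and no special case arises. For \(N=2,3\), there is nothing to prove.

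There is no real obstacle: the only points that need care are that the indices \(N-r\) stay in the range \([2,N-1]\), and that \(a_N\) is invertible. Both are supplied by Proposition~\ref{prop:syzygy}, the latter using \(\operatorname{char}(k)\neq5\).
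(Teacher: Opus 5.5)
Your proposal is correct and follows essentially the same argument as the paper: strong induction on $N$, applying the five-periodic syzygy for $N\equiv1,4\pmod5$ with $2\leq N-r\leq N-1$ and inverting the unit $a_N=\pm(2-c)$. Your separate check of $N=4$ is harmless but not needed, since the strong-induction step already covers it.
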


\begin{proof}
Since \(D\subseteq\{n\geq2\}\), the inclusion
\[
  I\subseteq\langle g_n\mid n\geq2\rangle
\]
is immediate.  For the reverse inclusion, we prove \(g_N\in I\) by
strong induction on \(N\geq2\).  If \(N\in D\), this follows from the
definition of \(I\).  Otherwise \(N\equiv1\) or \(4\pmod5\), so
\(a_N\) is a unit by Proposition~\ref{prop:syzygy}.  Moreover,
\[
  2\leq N-r\leq N-1
  \qquad (1\leq r\leq N-2),
\]
and hence \(g_{N-r}\in I\) by the induction hypothesis.  Therefore
\[
  a_Ng_N
  =-\sum_{r=1}^{N-2}\eta_{4N-2r}x_rg_{N-r}\in I
\]
by \eqref{eq:five-periodic-syzygy}.  Since \(a_N\) is a unit, it
follows that \(g_N\in I\).  This completes the induction and proves
the reverse inclusion.
\end{proof}


\section{The quotient isomorphism}\label{sec:quotient}

Although Proposition~\ref{prop:all-relations} shows that
\[
 I=(g_n:n\geq2),
\]
the original generators indexed by \(D\) provide the triangular
presentation needed to identify the quotient.  Recall that
\(A\sqcup D=\NN_{>0}\).  For \(n\in D\), equation
\eqref{eq:gn-definition} has the form
\begin{equation}\label{eq:triangular-relation}
 g_n=b_nx_n+h_n(x_1,\ldots,x_{n-1}),\qquad
 b_n=
 \begin{cases}
   2-c,&n\equiv0\modulo5,\\
   -(1+2c),&n\equiv2,3\modulo5,
 \end{cases}
\end{equation}
where \(h_n\in k[x_1,\ldots,x_{n-1}]\) is homogeneous of weighted
degree \(n\).  Moreover,
\[
 (1+2c)^2=5,\qquad (2-c)(3+c)=5.
\]
Since \(\operatorname{char}(k)\neq5\), both \(1+2c\) and \(2-c\) are
units.  Hence \(b_n\in k^\times\) for every \(n\in D\).

\begin{proposition}\label{prop:quotient-isomorphism}
The homomorphism \(\theta\) in \eqref{eq:canonical-map} is an
isomorphism of graded \(k\)-algebras.
\end{proposition}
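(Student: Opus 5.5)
We prove that \(\theta\) is bijective by constructing an explicit inverse through triangular elimination. Grading is automatic: \(\deg y_n=\deg x_n=n\) and \(I\) is homogeneous, so \(\theta\) is a graded homomorphism. The plan is to define a \(k\)-algebra map \(\psi:S\to T\) that kills \(I\), and then show that the induced map \(\bar\psi:S/I\to T\) is inverse to \(\theta\).

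\emph{Construction of \(\psi\).} We define \(\psi(x_n)\in T\) by strong induction on \(n\), using \(A\sqcup D=\NN_{>0}\):
\[
 \psi(x_n)=y_n\quad(n\in A),\qquad
 \psi(x_n)=-b_n^{-1}\,h_n\bigl(\psi(x_1),\ldots,\psi(x_{n-1})\bigr)\quad(n\in D).
\]
This is well defined because \(h_n\) involves only \(x_1,\ldots,x_{n-1}\) by \eqref{eq:triangular-relation}, and \(b_n\in k^\times\). By induction, each \(\psi(x_n)\) is homogeneous of degree \(n\), since \(h_n\) is weighted homogeneous of degree \(n\). Extending to a \(k\)-algebra homomorphism \(\psi:S\to T\) gives \(\psi(g_n)=b_n\psi(x_n)+h_n(\psi(x_1),\ldots)=0\) for \(n\in D\). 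Hence \(\psi(I)=0\), and \(\psi\) descends to a graded map \(\bar\psi:S/I\to T\).

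\emph{The two composites.} For \(n\in A\), \(\bar\psi\theta(y_n)=\psi(x_n)=y_n\), so \(\bar\psi\circ\theta=\mathrm{id}_T\) on generators and hence everywhere. For the other composite it suffices to check \(\theta\bar\psi(\overline{x_n})=\overline{x_n}\) for all \(n\), which we do by strong induction. If \(n\in A\), this is immediate. If \(n\in D\), the relation \(g_n\in I\) gives \(\overline{x_n}=-b_n^{-1}h_n(\overline{x_1},\ldots,\overline{x_{n-1}})\) in \(S/I\). Applying the algebra map \(\theta\bar\psi\) and the induction hypothesis yields
\[
 \theta\bar\psi(\overline{x_n})=-b_n^{-1}h_n\bigl(\theta\bar\psi(\overline{x_1}),\ldots\bigr)=-b_n^{-1}h_n(\overline{x_1},\ldots,\overline{x_{n-1}})=\overline{x_n}.
\]
Thus \(\theta\) and \(\bar\psi\) are mutually inverse graded isomorphisms.

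The main point requiring care is that the recursion is genuinely triangular and that \(\psi\) is well defined on the quotient. Triangularity holds because \(h_n\) involves no \(x_n\), since the only linear term in \(g_n\) is \((s_n-c)x_n\), and the quadratic terms \(x_ix_{n-i}\) with \(1\le i<n/2\), together with \(x_{n/2}^2\), have indices strictly below \(n\). Well-definedness holds because \(I\) is generated by the \(g_n\) with \(n\in D\) alone, so we never need Proposition~\ref{prop:all-relations} here. The remaining ingredient is invertibility of \(b_n\), which was checked above using \(\operatorname{char}(k)\neq5\).
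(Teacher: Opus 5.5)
Your proposal is correct and follows the paper's proof essentially step for step. You use the same recursive triangular elimination defining \(\psi(x_n)\) (the paper's \(u_n\)), the same check that \(\psi\) kills the generators \(g_n\) with \(n\in D\), and the same induction giving \(\theta\circ\overline\psi=\operatorname{id}_{S/I}\); as in the paper, gradedness comes from homogeneity of the \(u_n\), and Proposition~\ref{prop:all-relations} is not needed.
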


\begin{proof}
Define \(u_n\in T\) recursively by
\begin{equation}\label{eq:recursive-elimination}
 u_n=
 \begin{cases}
   y_n,&n\in A,\\
   -b_n^{-1}h_n(u_1,\ldots,u_{n-1}),&n\in D.
 \end{cases}
\end{equation}
This recursion is well defined because \(h_n\) involves only variables
with indices smaller than \(n\).  By the universal property of \(S\),
the assignments \(x_n\mapsto u_n\) define a \(k\)-algebra homomorphism
\[
 \psi:S\longrightarrow T.
\]
For every \(n\in D\),
\[
 \psi(g_n)=b_nu_n+h_n(u_1,\ldots,u_{n-1})=0.
\]
Thus \(I\subseteq\ker\psi\), and \(\psi\) induces a homomorphism
\[
 \overline\psi:S/I\longrightarrow T.
\]
If \(n\in A\), then
\[
 (\overline\psi\circ\theta)(y_n)
 =\overline\psi(\overline{x_n})=u_n=y_n.
\]
Since the \(y_n\), \(n\in A\), generate \(T\), we obtain
\(\overline\psi\circ\theta=\operatorname{id}_T\).

Conversely, we prove by induction on \(n\) that
\(\theta(u_n)=\overline{x_n}\).  This is immediate when \(n\in A\).
If \(n\in D\), then the induction hypothesis and
\eqref{eq:recursive-elimination} give
\[
\begin{aligned}
 \theta(u_n)
 &=-b_n^{-1}
   h_n\bigl(\theta(u_1),\ldots,\theta(u_{n-1})\bigr)\\
 &=-b_n^{-1}
   h_n(\overline{x_1},\ldots,\overline{x_{n-1}})
  =\overline{x_n},
\end{aligned}
\]
where the last equality follows from \(g_n=0\) in \(S/I\).
Therefore
\[
 (\theta\circ\overline\psi)(\overline{x_n})
 =\overline{x_n}
\]
for every \(n\), and hence
\(\theta\circ\overline\psi=\operatorname{id}_{S/I}\).

Finally, induction in \eqref{eq:recursive-elimination} shows that
\(u_n\) is homogeneous of weighted degree \(n\).  Consequently,
\(\overline\psi\) is graded, while \(\theta\) is graded by definition.
\end{proof}


\section{The infinite reduced Gr\"obner basis}\label{sec:groebner}

We now prove the Gr\"obner-basis assertion of
Theorem~\ref{thm:main}.  Although \(S\) has infinitely many variables,
every polynomial has finite support, and every representation below
is a finite sum.  Let
$$
 \M=\NN_{\geq0}^{(\NN_{>0})}
$$
be the additive monoid of finitely supported exponent vectors, and
write \(x^\alpha=\prod_{i\geq1}x_i^{\alpha_i}\).  A monomial order is
a total well-order on \(\M\) that is preserved by addition and has
\(0\) as its least element.

For \(\alpha\in\M\), define
\begin{equation}\label{eq:weight-moment}
 \wt(\alpha)=\sum_{i\geq1}i\alpha_i,
 \qquad
 \smom(\alpha)=\sum_{i\geq1}i^2\alpha_i.
\end{equation}
We use the same notation for monomials:
\(\wt(x^\alpha)=\wt(\alpha)\) and
\(\smom(x^\alpha)=\smom(\alpha)\).
For distinct \(\alpha,\beta\in\M\), let
$$
 r=\min\{i\geq1:\alpha_i\neq\beta_i\}.
$$
We declare \(x^\alpha\prec x^\beta\) if one of the following holds:
\begin{enumerate}[label=(\roman*),leftmargin=2.2em,itemsep=1pt]
\item \(\wt(\alpha)<\wt(\beta)\);
\item the weights agree and \(\smom(\alpha)>\smom(\beta)\);
\item the weights and second moments agree and
      \(\alpha_r<\beta_r\).
\end{enumerate}
Let \(\preceq\) be the reflexive closure of \(\prec\).  Thus, at a
fixed weight, a smaller second moment gives a larger monomial.
We use the same order notation for exponent vectors and monomials.

\begin{lemma}\label{lem:monomial-order}
The order \(\preceq\) is a monomial order.
\end{lemma}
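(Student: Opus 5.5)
The plan is to verify the three defining properties in turn: that \(\preceq\) is a total order, that it is compatible with addition, that \(0\) is least, and then that it is a well-order.

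\emph{Totality and antisymmetry.} Given distinct \(\alpha,\beta\in\M\), exactly one of the three clauses decides the comparison. If the weights differ, clause (i) applies. If the weights agree but the second moments differ, clause (ii) applies. Otherwise clause (iii) applies, since \(r\) exists because \(\alpha\neq\beta\) and \(\alpha_r\neq\beta_r\). The clauses are mutually exclusive and each is asymmetric, so exactly one of \(\alpha\prec\beta\), \(\beta\prec\alpha\) holds.

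\emph{Transitivity.} I view \(\prec\) as a lexicographic comparison of the triple \((\wt(\alpha),-\smom(\alpha),\alpha)\), where the last entry is compared by the ``first differing index'' lex order on \(\M\). That lex order on finitely supported vectors is a strict total order: if \(\alpha<\beta\) at index \(r\) and \(\beta<\gamma\) at index \(r'\), then \(\alpha<\gamma\) at index \(\min(r,r')\). A lexicographic product of strict total orders is again a strict total order, so transitivity follows.

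\emph{Compatibility and least element.} Both \(\wt\) and \(\smom\) are additive, and adding \(\gamma\) does not change the first index at which two vectors differ or the sign of the difference there. Hence \(\alpha\prec\beta\) implies \(\alpha+\gamma\prec\beta+\gamma\). Every \(\alpha\neq0\) has \(\wt(\alpha)\ge1>0=\wt(0)\), so \(0\) is the least element.

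\emph{Well-foundedness.} This is the only step needing care, since lex order on \(\M\) alone is not well-founded and clause (ii) reverses the direction of \(\smom\). The key observation is that for fixed \(w\) the set \(\{\alpha:\wt(\alpha)=w\}\) is finite. Indeed, \(\alpha_i=0\) for \(i>w\) and \(\alpha_i\le w\) for all \(i\), so these vectors correspond to partitions of \(w\). Now take a nonempty subset \(X\subseteq\M\). Let \(w\) be the minimal weight attained in \(X\). The elements of \(X\) of weight \(w\) form a finite nonempty set, which is totally ordered by \(\prec\), so it has a least element. By clause (i) this element is the least element of \(X\). Equivalently, a strictly descending chain has nonincreasing weights, so the weight stabilizes after finitely many steps, and the chain then lies in a finite set and must terminate.
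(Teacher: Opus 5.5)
Your proposal is correct and follows essentially the same route as the paper. Totality comes from the three successive comparisons, compatibility from additivity of \(\wt\) and \(\smom\) together with invariance of the first differing coordinate, and well-ordering from taking the minimal weight and using finiteness of each weight class; your explicit lexicographic-product argument for transitivity just fills in a step the paper leaves implicit.
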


\begin{proof}
The three successive comparisons define a total order.  It is
preserved by addition because weight and second moment are additive,
and adding the same exponent vector preserves the first differing
coordinate.

For a nonempty subset \(U\subseteq\M\), choose the least weight \(N\)
occurring in \(U\).  There are only finitely many exponent vectors of
weight \(N\), since \(\alpha_i=0\) for \(i>N\) and
\(\alpha_i\leq N/i\).  The elements of \(U\) of weight \(N\)
therefore have a least element, which is least in \(U\).
Finally, \(0\) is the unique exponent vector of weight zero.
\end{proof}

For a nonzero polynomial \(f\), let \(\lm(f)\) be the largest monomial
in its support, let \(\lc(f)\) be its coefficient, and put
\(\lt(f)=\lc(f)\lm(f)\).  For nonzero \(f,h\in S\), set
$$
 L=\operatorname{lcm}\bigl(\lm(f),\lm(h)\bigr).
$$
Their \(S\)-polynomial is
\begin{equation}\label{eq:S-polynomial}
 S(f,h)=
 \lc(f)^{-1}\frac{L}{\lm(f)}f-
 \lc(h)^{-1}\frac{L}{\lm(h)}h.
\end{equation}
For an ideal \(\mathfrak a\subseteq S\), its initial ideal is
$$
 \inideal(\mathfrak a)
 =\langle\lm(f):0\neq f\in\mathfrak a\rangle.
$$
A set \(\mathcal B\subseteq\mathfrak a\setminus\{0\}\) is a
Gr\"obner basis of \(\mathfrak a\) if
$$
 \inideal(\mathfrak a)=\langle\lm(g):g\in\mathcal B\rangle.
$$
A standard representation of \(p\) below a monomial \(d\), with
respect to \(\mathcal B\), is an equality
$$
 p=\sum_{g\in F}a_gg,\qquad
 F\subseteq\mathcal B\text{ finite},\qquad a_g\in S,
$$
such that \(\lm(a_gg)\prec d\) for every nonzero summand.
Buchberger's criterion also applies to infinite
sets~\cite[Proposition~1.13 and its proof]{IimaYoshino2009}: a monic generating set is a
Gr\"obner basis if every \(S\)-polynomial of distinct generators has
a standard representation below the corresponding least common
multiple.  Pairs with relatively prime leading monomials satisfy
this condition by the product criterion.

\begin{lemma}\label{lem:leading-monomials}
For every \(m\geq1\),
$$
 \lm(g_{2m})=x_m^2,\qquad
 \lm(g_{2m+1})=x_mx_{m+1},
$$
with leading coefficients \(1\) and \(c\), respectively.
\end{lemma}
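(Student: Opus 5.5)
All monomials in the support of \(g_n\) have weight \(n\), so by Lemma~\ref{lem:monomial-order} the leading monomial is the one with the \emph{smallest} second moment, with ties broken by the first-differing-coordinate rule. The plan is to list the support of \(g_n\) explicitly from \eqref{eq:gn-definition}, compute second moments, and find the unique minimizer. The candidates are \(x_n\) and \(x_ix_{n-i}\) for \(1\le i<n/2\), together with \(x_{n/2}^2\) when \(n\) is even. Some coefficients may vanish, since \(s_{n-2i}\) or \(s_n-c\) could be zero. Because we only need the leading monomial to appear with a nonzero coefficient, vanishing of the other coefficients is harmless.

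\emph{Second moments.} We have \(\smom(x_n)=n^2\) and \(\smom(x_ix_{n-i})=i^2+(n-i)^2\). The quadratic \(i\mapsto i^2+(n-i)^2\) is strictly decreasing for \(i\le n/2\), with minimum value \(n^2/2\) at \(i=n/2\). Also \(i^2+(n-i)^2<n^2\) for \(1\le i\le n-1\), because \(n^2-i^2-(n-i)^2=2i(n-i)>0\). Hence the linear term is never leading, and among the quadratic terms the one with \(i\) closest to \(n/2\) strictly minimizes the second moment. No tie-breaking by rule (iii) is needed, since distinct \(i\) in the range \(1\le i\le n/2\) give distinct second moments.

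\emph{The two cases.} For \(n=2m\), the minimizer is \(x_m^2\), whose coefficient is \(1\) by \eqref{eq:gn-definition}. For \(n=2m+1\), the minimizer is \(i=m\), i.e.\ \(x_mx_{m+1}\), with coefficient \(s_{n-2m}=s_1=c\). This is nonzero because \(c^2+c=1\) forces \(c\neq0\). The case \(m=1\) needs a quick check that the range of the sum is nonempty: for \(n=3\) the only term is \(i=1\), giving \(x_1x_2\), and for \(n=2\) the leading term is \(x_1^2\). Both are consistent with the general argument.

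The only point requiring care is the direction of the order at fixed weight: a \emph{smaller} \(\smom\) means a \emph{larger} monomial. I would state this explicitly and then verify the inequality \(2i(n-i)>0\). I do not expect a genuine obstacle here.
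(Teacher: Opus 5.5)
Your proposal is correct and matches the paper's proof. Every term of \(g_n\) has weight \(n\), so the leading monomial is the one with the smallest second moment. That is the quadratic \(x_ix_{n-i}\) with \(i\) closest to \(n/2\), since the linear term \(x_n\) has strictly larger second moment, and its coefficients are \(1\) and \(s_1=c\neq0\). Your explicit checks fill in details the paper leaves implicit: the identity \(n^2-i^2-(n-i)^2=2i(n-i)>0\), strict monotonicity so rule (iii) is never needed, and \(c\neq0\).
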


\begin{proof}
All terms of \(g_n\) have weight \(n\).  Among its quadratic
monomials \(x_ix_{n-i}\), with \(1\leq i\leq n/2\), the second moment
\(i^2+(n-i)^2\) is uniquely minimized when the indices are as close
as possible.  The resulting monomials are \(x_m^2\) and
\(x_mx_{m+1}\), with nonzero coefficients \(1\) and \(s_1=c\).
The linear monomial \(x_n\), when present, has larger second moment
and is therefore smaller.
\end{proof}

For \(n\geq2\), put
$$
 q_n=\lm(g_n)
     =x_{\lfloor n/2\rfloor}x_{\lceil n/2\rceil}.
$$
Inspection of their variable supports shows that the distinct
non-coprime pairs are, up to interchanging the entries,
\begin{equation}\label{eq:overlap-families}
 (q_{2m},q_{2m+1}),\qquad
 (q_{2m+1},q_{2m+2}),\qquad
 (q_{2m-1},q_{2m+1}),
\end{equation}
where \(m\geq1\) in the first two families and \(m\geq2\) in the
third.

\begin{proposition}\label{prop:groebner}
The set \(\mathcal G\) in \eqref{eq:G-definition} is the reduced
Gr\"obner basis of \(I\).  Consequently,
$$
 \inideal(I)=(x_m^2,x_mx_{m+1}:m\geq1)=J.
$$
\end{proposition}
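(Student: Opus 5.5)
We apply Buchberger's criterion for infinite sets to the monic set \(\mathcal G\), then verify reducedness directly. By Proposition~\ref{prop:all-relations}, every \(g_n\) lies in \(I\) and \(I=\langle g_n\mid n\geq2\rangle\). Hence \(\mathcal G\subseteq I\) generates \(I\). By Lemma~\ref{lem:leading-monomials}, the normalized elements of \(\mathcal G\) are monic with leading monomials \(q_n\), \(n\geq2\). Once the Gröbner property is established, the initial ideal is \(\langle q_n\rangle=(x_m^2,x_mx_{m+1})=J\). The product criterion handles coprime pairs. It therefore remains to produce standard representations for the three overlap families in \eqref{eq:overlap-families}.

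For each overlap pair, the lcm is a cubic monomial of weight \(N\): for instance \(x_m^2x_{m+1}\) of weight \(3m+1\), \(x_mx_{m+1}^2\) of weight \(3m+2\), and \(x_{m-1}x_mx_{m+1}\) of weight \(3m\). In each case the S-polynomial has the form \(x_a g_{N-a}-\lambda x_b g_{N-b}\), with the normalizations \(1\) or \(c^{-1}\). My plan is to rewrite it using the five-periodic syzygy \eqref{eq:five-periodic-syzygy} at level \(N\):
\[
a_Ng_N+\sum_{r=1}^{N-2}\eta_{4N-2r}x_rg_{N-r}=0 .
\]
The two terms \(x_ag_{N-a}\) and \(x_bg_{N-b}\) of the S-polynomial appear in this sum. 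I would first check, using the table \eqref{eq:periodic-data} and \eqref{eq:eta-vanishing}, that the ratio \(\eta_{4N-2a}/\eta_{4N-2b}\) matches the ratio of leading coefficients. Granting this, the S-polynomial equals a scalar multiple of
\[
-a_Ng_N-\sum_{r\neq a,b}\eta_{4N-2r}x_rg_{N-r}.
\]
If the coefficients instead fail to match for some residue class mod \(5\), I would combine the syzygies at two levels, or apply the syzygy at level \(N\) shifted by multiplying by a variable. This case check across the residues of \(m\) mod \(5\) is the main obstacle: the right cancellation must occur in all five classes and for all three families.

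Next I must show that this is a standard representation below the lcm \(L\). For \(r\neq a,b\), we have \(\lm(x_rg_{N-r})=x_rq_{N-r}\), a cubic of weight \(N\). Its second moment is \(r^2+\smom(q_{N-r})\), and this is strictly larger than \(\smom(L)\), because among the index triples with \(r\) fixed, the minimum second moment is achieved by the most balanced split. The three-indices-nearly-equal lcm \(L\) is the unique balanced cubic, apart from the two pair terms. Under our order, larger second moment means a smaller monomial, so all these terms lie strictly below \(L\). The term \(a_Ng_N\) has leading monomial the quadratic \(q_N\). A quadratic of weight \(N\) versus a cubic of weight \(N\) requires care. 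Write \(q_N=x_ix_j\) and \(L=x_ux_vx_w\) with \(u+v+w=i+j\). Then \(i^2+j^2\) exceeds \(u^2+v^2+w^2\) by at least \(2(uv+\dots)\)-type cross terms; concretely, splitting \(j=v+w\) gives \(j^2>v^2+w^2\). Hence \(q_N\prec L\). When \(a_N=0\), that term simply vanishes.

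Finally, for reducedness I would show that no monomial in the support of an element of \(\mathcal G\), other than its leading monomial, is divisible by some \(q_n\). The non-leading monomials of \(g_n\) are \(x_n\), which is divisible by no quadratic, and \(x_ix_{n-i}\) with \(n-i-i\geq2\). Such a monomial is divisible by a \(q_p\) only if it equals \(q_p=x_{\lfloor p/2\rfloor}x_{\lceil p/2\rceil}\), and this requires the two indices to differ by at most \(1\), which is impossible. Together with monicity, this shows that \(\mathcal G\) is the reduced Gröbner basis. It is also minimal, since the \(q_n\) are pairwise non-dividing: distinct quadratics never divide one another. This completes the proof of Proposition~\ref{prop:groebner}.
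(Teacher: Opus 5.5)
Your route is the paper's: Buchberger's criterion for the monic set $\mathcal G$, the syzygy \eqref{eq:five-periodic-syzygy} at levels $N=3m+1,3m+2,3m$ for the three overlap families, and reducedness by inspecting tail monomials. There is, however, a genuine gap in the third family. For $N=3m$ the most balanced cubic of weight $N$ is $x_m^3$, not $L=x_{m-1}x_mx_{m+1}$. It occurs as $\lm(x_mg_{2m})$, from $r=m$, which is not one of the pair indices $m\pm1$. Since $\smom(x_m^3)=3m^2<3m^2+2=\smom(L)$, this term lies \emph{above} $L$. So your claim that $L$ is ``the unique balanced cubic, apart from the two pair terms'' is false here, and as written the representation is not standard. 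It is rescued only because the coefficient of that term is $\eta_{4N-2m}=\eta_{10m}=\eta_0=0$, an instance of \eqref{eq:eta-vanishing}; you must invoke this vanishing explicitly. For the first two families your ``unique balanced triple'' argument is correct and is a tidy substitute for the paper's difference formula \eqref{eq:rho-difference}. For $N=3m$ you need the refinement that the only cubics of weight $3m$ with $\smom\le 3m^2+2$ are $x_m^3$ and $L$, together with the vanishing of the $r=m$ term.

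Second, you leave the coefficient match as ``granting this'' and call the residue of $m$ modulo $5$ the main obstacle, offering only a vague fallback. The check is the heart of the proof and must be done, but there is no obstacle: for $N=3m+e$ and $r=m+j$ one has $4N-2r\equiv 4e-2j\pmod 5$, which is independent of $m$. This yields the coefficient pairs $(\eta_4,\eta_2)=(-1,c)$ for $r=m,m+1$; $(\eta_3,\eta_1)=(-c,1)$ for $r=m,m+1$; and $(\eta_2,\eta_3)=(c,-c)$ for $r=m-1,m+1$. Hence the two dominant terms equal $cS(g_{2m},g_{2m+1})$, $cS(g_{2m+1},g_{2m+2})$ and $-c^2S(g_{2m-1},g_{2m+1})$, respectively. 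Finally, your justification of $q_N\prec L$ (``splitting $j=v+w$'') works only when $q_N$ and $L$ share an index, which fails in general, e.g.\ $q_{10}=x_5^2$ against $L=x_3^2x_4$. Compare instead $\smom(q_N)\ge N^2/2$ with the explicit values $3m^2+2m+1$, $3m^2+4m+2$, $3m^2+2$ of $\smom(L)$; these are strictly smaller in the stated ranges of $m$.
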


\begin{proof}
Since \(c(c+1)=1\), the element \(c\) is a unit.
Proposition~\ref{prop:all-relations} and
Lemma~\ref{lem:leading-monomials} therefore show that
\(\mathcal G\) is a monic generating set of \(I\).
By the product criterion, it remains to consider the three families
in \eqref{eq:overlap-families}.

For \(1\leq r\leq N-2\), the leading monomial of \(x_rg_{N-r}\)
has a second moment
$$
 \rho_N(r)=r^2+
 \left\lfloor\frac{N-r}{2}\right\rfloor^2+
 \left\lceil\frac{N-r}{2}\right\rceil^2
 =r^2+\left\lceil\frac{(N-r)^2}{2}\right\rceil.
$$
For \(1\leq r\leq N-3\), direct subtraction gives
\begin{equation}\label{eq:rho-difference}
 \rho_N(r+1)-\rho_N(r)=
 \begin{cases}
  3r+2-N,&\text{if }N-r\text{ is even},\\
  3r+1-N,&\text{if }N-r\text{ is odd}.
 \end{cases}
\end{equation}
When \(N=3m+1\) or \(N=3m+2\), with \(m\geq1\), these differences
show that the minimum occurs exactly at \(r=m,m+1\).
When \(N=3m\), with \(m\geq2\), the unique minimum occurs at \(r=m\);
after this value is removed, the minimum occurs exactly at
\(r=m-1,m+1\), since the function decreases before \(m\), increases
after \(m\), and
$$
 \rho_N(m-1)=\rho_N(m+1)=\rho_N(m)+2.
$$


When \(N=3m\), the central term \(r=m\) vanishes because
\(\eta_{4N-2m}=\eta_{10m}=0\). Consequently, the two nonzero product
terms sharing the largest leading monomial are those indexed by
\(r=m-1\) and \(r=m+1\).

Using the five values of \(\eta\), the two nonzero product terms
sharing the largest leading monomial in each relevant syzygy
have the following sums:
$$
\begin{array}{c|c|l}
 N&r&\text{sum of the two terms}\\ \hline
 3m+1&m,m+1&
 -x_mg_{2m+1}+cx_{m+1}g_{2m}
 =cS(g_{2m},g_{2m+1})\\[2pt]
 3m+2&m,m+1&
 -cx_mg_{2m+2}+x_{m+1}g_{2m+1}
 =cS(g_{2m+1},g_{2m+2})\\[2pt]
 3m&m-1,m+1&
 cx_{m-1}g_{2m+1}-cx_{m+1}g_{2m-1}
 =-c^2S(g_{2m-1},g_{2m+1}).
\end{array}
$$
Their common leading monomials are the corresponding least common
multiples
$$
 x_m^2x_{m+1},\qquad
 x_mx_{m+1}^2,\qquad
 x_{m-1}x_mx_{m+1},
$$
whose second moments are, respectively,
$$
 3m^2+2m+1,\qquad
 3m^2+4m+2,\qquad
 3m^2+2.
$$
Every other nonzero product term in the syzygy has larger second
moment by \eqref{eq:rho-difference}.  For the stated ranges of \(m\),
each displayed value is also strictly smaller than
\(N^2/2\leq\lceil N^2/2\rceil=\smom(q_N)\).
Thus, the standalone term \(a_Ng_N\), when nonzero, has a larger second
moment as well.  Since all these monomials have weight \(N\), their
larger second moments place them strictly below the corresponding
least common multiple.

For each row, let \(\mathcal R\) be its pair of indices, and let
\(\lambda=c,c,-c^2\), respectively.  Solving
\eqref{eq:five-periodic-syzygy} for the indicated \(S\)-polynomial
gives
$$
 S(f,h)=-\lambda^{-1}\left(
 a_Ng_N+
 \sum_{\substack{1\leq r\leq N-2\\r\notin\mathcal R}}
 \eta_{4N-2r}x_rg_{N-r}\right),
$$
where \(f,h\) are the pair shown in that row.  Every nonzero summand
on the right has a leading monomial strictly below
\(\operatorname{lcm}(\lm(f),\lm(h))\).
Nonzero rescaling of either argument leaves its \(S\)-polynomial
unchanged.  Expressing each odd \(g_j\) as \(c(c^{-1}g_j)\) therefore
gives standard representations with respect to \(\mathcal G\).
Buchberger's criterion proves that \(\mathcal G\) is a Gr\"obner
basis, and its leading monomials generate \(J\).

It remains to check reducedness.  Every nonleading quadratic
monomial has two distinct indices differing by at least two.
It is therefore divisible by neither a square nor a product of
consecutive variables.  Linear tail monomials are also standard
modulo \(J\).  Moreover, the \(q_n\) are pairwise distinct quadratics,
so none divides another.  Since \(\mathcal G\) is monic, it is
reduced.
\end{proof}

\begin{proof}[Completion of the proof of Theorem~\ref{thm:main}]
Proposition~\ref{prop:quotient-isomorphism} proves the graded
quotient isomorphism, and Proposition~\ref{prop:groebner} proves
the reduced Gr\"obner-basis and initial-ideal assertions.
\end{proof}

\section{The Rogers-Ramanujan correspondence}\label{sec:partitions}

For \(n\geq0\), a partition of \(n\) is a multiplicity vector
\(a=(a_i)_{i\geq1}\in\M\) satisfying
\[
  \sum_{i\geq1}ia_i=n.
\]
Let \(P(n)\) consist of the partitions with \(a_i=0\) unless
\(i\equiv1,4\pmod5\), and let \(Q(n)\) consist of those that satisfy
\begin{equation}\label{eq:Q-condition}
 a_i\leq1,\qquad a_i\neq0\Longrightarrow a_{i+1}=0
 \quad(i\geq1).
\end{equation}
Thus the parts of a partition in \(Q(n)\) differ by at least two.
Both sets are finite.  The monomials of weighted degree \(n\) in
\(T\) are indexed by \(P(n)\), while a monomial \(x^a\) lies outside
\(J\) precisely when \eqref{eq:Q-condition} holds.

Let \(S_n\) and \(T_n\) denote the components of weighted degree \(n\),
and put
$$
 V_n=\operatorname{span}_k\{x^\mu:\mu\in Q(n)\}\subseteq S_n.
$$
Since \(\mathcal G\) is a homogeneous Gr\"obner basis with initial
ideal \(J\), every class in \((S/I)_n\) has a unique representative
in \(V_n\) \cite[Proposition~1.14]{IimaYoshino2009}.  For \(f\in S_n\), denote this representative by
\(\NF_{\mathcal G}(f)\).  The residue map
\(V_n\to(S/I)_n\) is therefore a linear isomorphism.  Composing its
inverse with the degree-\(n\) part of \(\theta:T\to S/I\) gives a
linear isomorphism
\begin{equation}\label{eq:degree-normal-form-map}
 \Phi_n:T_n\longrightarrow V_n,\qquad
 y^\lambda\longmapsto\NF_{\mathcal G}(x^\lambda).
\end{equation}
Here, for \(\lambda\in P(n)\),
$$
 y^\lambda=\prod_{i\in A}y_i^{\lambda_i},
 \qquad
 x^\lambda=\prod_{i\in A}x_i^{\lambda_i}.
$$
The monomial bases of \(T_n\) and \(V_n\) are indexed by \(P(n)\)
and \(Q(n)\), respectively.

\begin{proposition}\label{prop:matching}
For \(\lambda\in P(n)\) and \(\mu\in Q(n)\), define
\begin{equation}\label{eq:normal-form-matrix}
 M_n(\mu,\lambda)
 =[x^\mu]\NF_{\mathcal G}(x^\lambda).
\end{equation}
After choosing orderings of \(P(n)\) and \(Q(n)\), the matrix \(M_n\)
is square and invertible.  There exists a bijection
\(\pi_n:P(n)\to Q(n)\) such that
\begin{equation}\label{eq:matching-property}
 [x^{\pi_n(\lambda)}]\NF_{\mathcal G}(x^\lambda)\neq0
 \qquad(\lambda\in P(n)).
\end{equation}
\end{proposition}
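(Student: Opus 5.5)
The plan is to identify \(M_n\) as the matrix of the linear isomorphism \(\Phi_n\) in \eqref{eq:degree-normal-form-map}, deduce that it is square and invertible, and then extract the bijection from a nonzero term in the Leibniz expansion of its determinant.

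First, \(M_n\) is exactly the matrix of \(\Phi_n\) with respect to the monomial bases \(\{y^\lambda:\lambda\in P(n)\}\) of \(T_n\) and \(\{x^\mu:\mu\in Q(n)\}\) of \(V_n\). Indeed, the column indexed by \(\lambda\) records the coefficients of \(\Phi_n(y^\lambda)=\NF_{\mathcal G}(x^\lambda)\) in the basis \(x^\mu\), \(\mu\in Q(n)\).

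By Proposition~\ref{prop:quotient-isomorphism}, the degree-\(n\) part \(\theta_n:T_n\to(S/I)_n\) is a linear isomorphism. By Proposition~\ref{prop:groebner}, \(\inideal(I)=J\), and the cited normal-form result gives that the residue map \(V_n\to(S/I)_n\) is an isomorphism. Hence \(\Phi_n\) is a linear isomorphism between finite-dimensional spaces. Consequently \(|P(n)|=\dim T_n=\dim V_n=|Q(n)|\), both sets being finite, and so after fixing orderings \(M_n\) is square and invertible. The only point to state carefully is that \(\Phi_n\) really is the composite, i.e.\ that \(\NF_{\mathcal G}(x^\lambda)\) is the \(V_n\)-representative of \(\theta(y^\lambda)=\overline{x^\lambda}\). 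This is immediate from the definitions, since \(\theta\) is multiplicative and sends \(y_i\) to \(\overline{x_i}\).

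For the bijection, set \(N=|P(n)|=|Q(n)|\) and fix bijections of both index sets with \(\{1,\ldots,N\}\). Since \(\det M_n\neq0\), the Leibniz formula
\[
 \det M_n=\sum_{\sigma}\operatorname{sgn}(\sigma)\prod_{\lambda\in P(n)}M_n(\sigma(\lambda),\lambda),
\]
summed over bijections \(\sigma:P(n)\to Q(n)\) and with sign taken relative to the fixed identifications, has at least one nonzero summand. Choose \(\pi_n\) to be a \(\sigma\) giving such a summand. Then every factor \(M_n(\pi_n(\lambda),\lambda)=[x^{\pi_n(\lambda)}]\NF_{\mathcal G}(x^\lambda)\) is nonzero, which is \eqref{eq:matching-property}.

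Equivalently, one can phrase this as Hall's theorem on the bipartite support graph: a nonsingular matrix has no all-zero \(s\times t\) submatrix with \(s+t>N\), by Frobenius--König.

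There is no real obstacle. The slightly delicate point is the first step, namely justifying the uniqueness of normal forms in degree \(n\) for an infinite homogeneous Gröbner basis. This is handled by citing the normal-form result already quoted, together with homogeneity, which confines everything to the finite-dimensional space \(S_n\), where only finitely many variables and basis elements occur.
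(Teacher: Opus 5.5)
Your proposal is correct and follows essentially the same route as the paper. You identify $M_n$ as the matrix of the isomorphism $\Phi_n$, deduce $|P(n)|=|Q(n)|$ and $\det M_n\neq0$, and take $\pi_n$ from a nonzero term of the Leibniz expansion of the determinant.
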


\begin{proof}
The matrix \(M_n\) represents the linear isomorphism \(\Phi_n\).
Hence \(d:=|P(n)|=|Q(n)|\) and \(\det M_n\neq0\).
Write the chosen orderings as
\(P(n)=\{\lambda_1,\ldots,\lambda_d\}\) and
\(Q(n)=\{\mu_1,\ldots,\mu_d\}\).
The determinant expansion
$$
 \det M_n=
 \sum_{\sigma\in\mathfrak S_d}
 \operatorname{sgn}(\sigma)
 \prod_{j=1}^d M_n(\mu_{\sigma(j)},\lambda_j)
$$
has at least one nonzero product.  Choose such a permutation
\(\sigma\) and set
\(\pi_n(\lambda_j)=\mu_{\sigma(j)}\).
This is a bijection, and each selected matrix entry is nonzero,
which proves \eqref{eq:matching-property}.
\end{proof}

Taking dimensions in each degree gives the Hilbert-series identity \cite[Corollary~1.15]{IimaYoshino2009}
\begin{equation}\label{eq:hilbert-series-preview}
\begin{aligned}
 \sum_{n\geq0}|P(n)|t^n
 &=\prod_{\substack{j\geq1\\j\equiv1,4\pmod5}}
   \frac{1}{1-t^j}
 =\operatorname{Hilb}_{S/I}(t)\\
 &=\operatorname{Hilb}_{S/J}(t)
 =\sum_{n\geq0}|Q(n)|t^n.
\end{aligned}
\end{equation}
This is an identity in \(\mathbb Z[[t]]\). Thus, the equality of
partition numbers is an equality of integers, including when \(k\)
has positive characteristic.

To obtain the usual series-product form, consider a partition in
\(Q(n)\) with \(\ell\) parts.  Subtracting the staircase
\((2\ell-1,2\ell-3,\ldots,1)\) leaves an ordinary partition with
at most \(\ell\) parts and decreases the weight by \(\ell^2\).
Consequently, \eqref{eq:hilbert-series-preview} becomes the first
Rogers–Ramanujan identity
$$
 \sum_{\ell\geq0}
 \frac{t^{\ell^2}}{\prod_{j=1}^{\ell}(1-t^j)}
 =
 \prod_{\substack{j\geq1\\j\equiv1,4\pmod5}}
 \frac{1}{1-t^j},
$$
where the empty product is \(1\).
Thus Theorem~\ref{thm:main} yields both the identity and a partition
bijection without assuming the Rogers–Ramanujan identity.

The bijection can be obtained by finite procedures.  In weighted
degree \(n\), only relations \(g_j\) with \(2\leq j\leq n\) can be
used in reduction.  Over a coefficient field with effective
arithmetic and zero-testing, with the chosen \(c\) given effectively,
one can therefore compute \(M_n\) and select a permutation with a
nonzero product of entries.  Fixing the orderings and choosing the
lexicographically first such permutation specifies \(\pi_n\).

An individual normal form may contain several standard monomials;
the permutation selects one from each column without repeating a
row.  For example, put \(\delta=1+2c\).  Using \(g_2,g_4,g_5\) and
\(c^{-1}=1+c\), reduction gives
\begin{equation}\label{eq:degree-five-normal-form}
 \NF_{\mathcal G}(x_1^5)
 =\frac{\delta^3}{c^3}x_1x_4-\frac{25}{c}x_5.
\end{equation}
Both coefficients are nonzero because \(\delta^2=5\) and
\(\operatorname{char}(k)\neq5\).  In parts notation,
$$
 P(5)=\{(1,1,1,1,1),(4,1)\},
 \qquad
 Q(5)=\{(5),(4,1)\}.
$$
Since \(\NF_{\mathcal G}(x_1x_4)=x_1x_4\), these orderings give
$$
 M_5=
 \begin{pmatrix}
  -25/c&0\\
  \delta^3/c^3&1
 \end{pmatrix}.
$$
The second column forces \((4,1)\) to match with \((4,1)\).
The unique matching supported by this matrix is therefore
$$
 (1,1,1,1,1)\longmapsto(5),
 \qquad
 (4,1)\longmapsto(4,1).
$$
In general, the construction consists of polynomial division followed
by a finite matching among the nonzero normal-form coefficients.

\section{Formalization in Lean 4}\label{sec:lean}

We have formalized the complex specialization of
Theorem~\ref{thm:main} and the partition bijection of
Proposition~\ref{prop:matching} in Lean~4~\cite{deMouraUllrich2021},
using Mathlib~\cite{Mathlib2020}.\footnote{\url{https://github.com/WuProver/IimaYoshinoProblem23}}
The development builds on our earlier formalization of
Gr\"obner-basis theory~\cite{GuoShenLiuZhi2026}.  Its support for
arbitrary variable-index types and infinite sets of basis elements
allows us to work directly with \(\CC[x_1,x_2,\ldots]\) and the full
family \(\mathcal G\).  In particular, we reuse the infinite
Buchberger criterion, polynomial division, and normal-form theory.

The polynomial ring is represented by
\texttt{MvPolynomial PNat Complex}, abbreviated as
\texttt{CaseI.S}.  In the namespace \texttt{CaseI}, the main
declaration corresponding to Theorem~\ref{thm:main} is:
\begin{lstlisting}
theorem theorem_1_2 (c : ℂ) (hc : c ^ 2 + c = 1) :
    ∃ hG : caseIMonomialOrder.IsGroebnerBasis (G c) (I c),
      hG.IsReduced ∧
      Ideal.span
        (caseIMonomialOrder.leadingTerm '' (I c : Set S)) = J ∧
      (∀ i : ℕ+,
        caseIMonomialOrder.degree (g c (2 * (i : ℕ))) =
          Finsupp.single i 2) ∧
      (∀ i : ℕ+,
        caseIMonomialOrder.degree (g c (2 * (i : ℕ) + 1)) =
          Finsupp.single i 1 + Finsupp.single (next i) 1) ∧
      Function.Bijective (allowedToQuotient c)
\end{lstlisting}
The witness \texttt{hG} certifies the Gr\"obner-basis property.
The remaining assertions give reducedness, the initial ideal,
the two leading-monomial formulas, and bijectivity of the canonical
algebra homomorphism.  Here
\texttt{caseIMonomialOrder.degree} denotes the leading exponent
vector.  Grading preservation is proved separately.

The proof verifies the five-periodic syzygy and uses its three
overlap families to construct standard representations of the
\(S\)-polynomials.  The existing formal Buchberger criterion then
establishes the Gr\"obner-basis property.  The quotient isomorphism
is verified through recursive elimination and mutually inverse
homomorphisms.  Thus the general infinite-variable theory supplies
the algebraic infrastructure, while the present development proves
the identities and constructions specific to this example.

The declaration corresponding to Proposition~\ref{prop:matching} is:
\begin{lstlisting}
theorem proposition_5_1 (c : ℂ) (hc : c ^ 2 + c = 1)
    (hG : caseIMonomialOrder.IsGroebnerBasis (G c) (I c)) (hred : hG.IsReduced) (n : ℕ) :
    ∃ π : P n ≃ Q n,
      ∀ lam : P n,
        (normalForm hG hred (partitionMonomial lam.val)).coeff
          (Partition.multiplicities (π lam).val) ≠ 0
\end{lstlisting}
Here \texttt{P n} and \texttt{Q n} are finite types of partitions,
and \texttt{P n} $\simeq$ \texttt{Q n} represents a bijection.  The coefficient
condition is precisely \eqref{eq:matching-property}.  The proof
constructs the two monomial bases and selects a nonzero permutation
 term in the determinant of the normal-form matrix.  
The names

The development also verifies the explicit choice
\(c=(-1+\sqrt5)/2\), yielding complex results without a remaining
hypothesis on \(c\).  The partition result is existential; an
executable matching algorithm is not implemented.  The general-field
theorem and the infinite generating-series identity are not included
in the Lean formalization.  The checked source uses Lean
\texttt{v4.34.0-rc1} and Mathlib revision \texttt{062f1e3da807}.
An axiom audit confirms that the main results depend only on Lean's
standard logical axioms, with no dependence on \texttt{sorryAx}.

\section{Conclusion}\label{sec:conclusion}

We have constructed an explicit ideal and monomial order satisfying
both requirements of Iima--Yoshino's problem under the field hypothesis
\eqref{eq:field-hypothesis}.  The five-periodic syzygy supplies the
omitted relations and the standard representations required by
Buchberger's criterion, while triangular elimination identifies the
quotient as a polynomial algebra.  The resulting normal-form matrices
yield bijections between \(P(n)\) and \(Q(n)\), giving an algebraic
proof of the first Rogers-Ramanujan identity.  The complex
specialization and the partition bijections have been formally
verified in Lean~4, using our earlier formalization of
infinite-variable Gr\"obner-basis theory.

Three directions remain for further work.  Can one construct a solution over fields
such as \(\mathbb Q\), where \(z^2+z-1\) has no root, or in
characteristic \(5\)?  Can the bijection in
Proposition~\ref{prop:matching} be described by a direct combinatorial
rule that satisfies the same normal-form support condition?
Finally, extending the Lean formalization to all fields satisfying
\eqref{eq:field-hypothesis} would bring its scope into agreement with
the mathematical theorem.

\section*{Acknowledgements}
GPT-5.6 sol proposed the construction presented in this paper during
discussions with the authors and assisted in drafting and revising
the manuscript. The authors have carefully verified the mathematical
statements and proofs and reviewed and revised the final text.
The complex specialization and the partition-matching theorem have
also been formally verified in Lean~4. The authors take full
responsibility for the content of the paper.
\bibliographystyle{amsplain}
\bibliography{references}

\end{document}